\title{A solution to an open problem on lower against number in graphs}
\author {
Babak Samadi\\
Department of Mathematics\\
Arak University\\
Arak, IRI\\
{\tt samadibabak62@gmail.com}\vspace{3mm}
}
\date{}
 \newtheorem{theorem}{Theorem}[section]
\newtheorem{lemma}[theorem]{Lemma}
\theoremstyle{definition}
\begin{document}

\maketitle

\begin{abstract}
\noindent  In \cite{w} the problem of finding a sharp lower bound on lower against number of a general graph is mentioned as an open question. We solve the problem by establishing a tight lower bound on lower against number of a general graph in terms of order and maximum degree.  \vspace{3mm}\\
{\bf Keywords:} Lower against number, maximal negative function.
\end{abstract}

\section{Introduction}
Throughout this paper, let $G$ be a finite connected graph with vertex set $V=V(G)$ and edge set $E=E(G)$. We use \cite{we} for terminology and notations which are not defined here. The open neighborhood of a vertex $v$ is denoted by $N(v)$, and the closed neighborhood of $v$ is $N[v]=N(v)\cup \{v\}$. For a subset $S‎\subseteq V(G)‎$, $N(S)=‎\cup‎_{v‎\in S‎}N(v)‎‎$. A graph $G$ is called $r$-regular if $deg(v)=r$ for every $v‎\in V(G)‎$, and nearly $r$-regular if $deg(v)‎\in\{r-1,r\}‎$ for every $v‎\in V(G)‎$.\\
Let $S \subseteq V$. For a real-valued function $f: V\rightarrow R$ we define $ f(S)=\sum_{v \in S }f(v)$. Also, $f(V)$ is the weight of $f$. A function $f:V\rightarrow \{ -1, 1\}$ is called negative if $f(N[v])‎\leq 1‎$, for every $v‎\in V(G)‎$. The maximum of values of $f(V(G))$, taken over all negative functions $f$, is called the against number $‎\beta‎_{N}(G)‎‎$. The author in \cite{w} exhibited a real-world application of it to social networks. (This concept was introduced by Zelinka \cite{z} as signed $2$-independence number).\\
  A negative function $f$ of a graph $G$ is maximal if there exist no negative function $g$ such that $g‎\neq f‎$ and $g(v)‎\geq f(v)‎$ for every $v‎\in V(G)‎$. The minimum of values of $f(V(G))$, taken over all maximal negative functions $f$, is called the lower against number and is denoted by $\beta‎_{N}^*(G)‎‎$.\\
In \cite{w}, Wang proved the following lower bounds on $\beta‎_{N}^*(G)‎‎$ for  regular and nearly regular graphs.
\begin{theorem}
Let $G$ is an $r$-regular graph of order $n$. Then, $\beta‎_{N}^*(G)‎\geq (r+2-r^2)n/(r+2+r^2)‎$ for $r$ even, and $\beta‎_{N}^*(G)‎\geq (1-r)n/(1+r)$ for $r$ odd. This bound is best possible.
\end{theorem}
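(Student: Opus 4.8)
Write $P=\{v\in V:f(v)=1\}$ and $M=\{v\in V:f(v)=-1\}$, and put $a=|P|$, $b=|M|$, so that $a+b=n$ and $f(V)=a-b=2a-n$. A direct manipulation shows that the two claimed inequalities are equivalent to purely combinatorial statements about these cardinalities: the even bound $f(V)\ge (r+2-r^2)n/(r+2+r^2)$ rearranges to $(r+2)\,b\le r^2\,a$, while the odd bound $f(V)\ge (1-r)n/(1+r)$ rearranges to $b\le r\,a$. Thus the plan is to discard the weight entirely and instead prove an upper bound on the number of negative vertices in terms of the number of positive ones.

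The next step is to read off the local consequences of the words ``negative'' and ``maximal''. Since $G$ is $r$-regular, $|N[v]|=r+1$ and $f(N[v])=2\,|N[v]\cap P|-(r+1)$, so $f(N[v])$ has the parity of $r+1$; hence negativity forces $f(N[v])\le 1$ when $r$ is even and $f(N[v])\le 0$ when $r$ is odd, and I call $v$ \emph{saturated} when this upper value is attained. Changing one negative vertex to $+1$ raises $f(N[u])$ by $2$ for every $u\in N[v]$, so such a change violates negativity precisely at the saturated vertices of $N[v]$; consequently maximality is equivalent to the statement that every negative vertex has a saturated vertex in its closed neighbourhood. A saturated closed neighbourhood contains exactly $\lceil (r+1)/2\rceil$ positive and $\lfloor (r+1)/2\rfloor$ negative vertices. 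I would then double count incidences between negative vertices and saturated vertices: each negative vertex contributes at least one (its witness), while each saturated $u$ is incident to exactly $\lfloor (r+1)/2\rfloor$ negatives, giving $b\le \lfloor (r+1)/2\rfloor\,|S|$, where $S$ is the saturated set; and summing $|N[u]\cap P|$ over all $u$ gives $\sum_u|N[u]\cap P|=(r+1)a$, whence $\lceil (r+1)/2\rceil\,|S|\le (r+1)a$.

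The main obstacle is that simply multiplying these two inequalities is lossy: for $r$ even it yields only $b\le r(r+1)a/(r+2)$, short of the required $b\le r^2a/(r+2)$ (and $b\le (r+1)a$ in place of $b\le ra$ in the odd case). The gap is genuine --- for instance the pattern $+,+,-$ on a cycle saturates every vertex and makes both crude inequalities far from tight --- so the sharp factor can be obtained only by exploiting two sources of slack at once: a negative vertex with several saturated neighbours is overcounted (only one witness is needed), and, by negativity, every positive vertex carries at least $\lceil r/2\rceil$ negative neighbours. I would therefore replace the crude count by a discharging scheme that assigns to each negative vertex a single saturated witness and then distributes a unit charge from that witness onto the $\lceil (r+1)/2\rceil$ positive vertices of its closed neighbourhood, choosing the witness assignment economically (through a minimal cover, or a Hall/flow argument) so that no positive vertex receives more than $r^2/(r+2)$, respectively $r$. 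The equality analysis should be guided by the extremal configuration --- an alternating-type pattern in which the negative vertices are exactly the saturated ones and each positive vertex sees only negative neighbours --- since this is precisely where both slacks vanish; exhibiting an $r$-regular host graph realising this pattern for every admissible $n$ then shows that the bound is best possible.
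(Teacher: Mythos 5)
Your preparatory work is correct: the reductions of the two bounds to $(r+2)b\le r^2a$ ($r$ even) and $b\le ra$ ($r$ odd), the parity analysis, the characterization of maximality by the existence of a saturated vertex in every negative vertex's closed neighbourhood (this is precisely Lemma 2.1, quoted by the paper from \cite{w}), and the two counting inequalities $b\le\lfloor(r+1)/2\rfloor\,|S|$ and $\lceil(r+1)/2\rceil\,|S|\le(r+1)a$ are all right. But the proof stops exactly where the theorem becomes hard, and you say so yourself: the decisive step is only announced in the conditional (``I would therefore replace the crude count by a discharging scheme\dots choosing the witness assignment economically\dots so that no positive vertex receives more than $r^2/(r+2)$, respectively $r$''). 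No discharging rule is defined, no bound on the charge arriving at a positive vertex is proved, and no argument (Hall, flow, minimal cover, or otherwise) is given that a witness assignment with the required property exists. Since everything before this point is routine and the sharp constants live entirely in this step, this is a genuine gap rather than a stylistic omission; the sharpness half of the statement (explicit extremal $r$-regular graphs) is likewise only gestured at.

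For comparison with what is actually on record: this paper never proves Theorem 1.1 directly --- it is Wang's result, quoted from \cite{w}, and it is even used as an ingredient in the proof of Theorem 2.2 for the even regular case --- but the odd-regular case is genuinely reproved inside Theorem 2.2, and the device used there is what your sketch is missing. One stratifies the negative set as $A_i=\{v\in M:|N(v)\cap P|=i\}$ for $0\le i\le l=\lfloor r/2\rfloor+1$, notes that a witness for a vertex of $A_0$ must itself lie in $M$ (an $A_0$-vertex has no positive neighbours), and deduces from $f(N[u])\in\{0,1\}$ that a witness $u\in A_i$ has at most $i-1$ neighbours in $A_0$; hence $|A_0|\le\sum_{i}(i-1)|A_i|$, summed over the admissible $i$. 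Feeding this into $n=a+\sum_i|A_i|$ together with the edge count $\sum_i i\,|A_i|\le ra$ gives $n\le(1+r)a$, i.e.\ $b\le ra$, with no need to control how many negative vertices share a witness --- the stratification absorbs the multiplicity issue that your scheme would have to resolve by hand. That is the concrete, checkable form your ``economical assignment'' would need to take; and note that even this only delivers the odd constant, while the even-regular constant $r^2/(r+2)$ requires a finer analysis (Wang's own), which neither your proposal nor this paper contains.
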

\begin{theorem}
For any nearly $r$-regular graph $G$ of order $n$, $\beta‎_{N}^*(G)‎\geq (1-r)n/(1+r)‎$. Furthermore, this bound is sharp.
\end{theorem}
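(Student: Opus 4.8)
The plan is to fix an arbitrary maximal negative function $f$ and to prove that its weight satisfies $f(V)\geq (1-r)n/(1+r)$; since $\beta_{N}^{*}(G)$ is the minimum of such weights, this gives the theorem. Write $P=\{v:f(v)=1\}$ and $M=\{v:f(v)=-1\}$, and set $p=|P|$, $m=|M|$, so that $p+m=n$ and $f(V)=p-m$. A one-line computation shows that $f(V)\geq (1-r)n/(1+r)$ is equivalent to the clean inequality $m\leq rp$, and this is what I would actually establish.

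First I would record what maximality buys. If $f(v)=-1$, then raising $f(v)$ to $1$ must destroy negativity, so there is a vertex $u\in N[v]$ (a \emph{witness} for $v$) with $f(N[u])+2>1$, i.e.\ $f(N[u])\geq 0$; by negativity also $f(N[u])\leq 1$. Writing $p(u)=|N[u]\cap P|$ and $m(u)=|N[u]\cap M|$, a witness satisfies $p(u)\geq m(u)$, and comparing the parities of $f(N[u])=p(u)-m(u)$ and $|N[u]|=\deg(u)+1$ forces $f(N[u])=0$ when $\deg(u)$ is odd and $f(N[u])=1$ when $\deg(u)$ is even. Equivalently, a witness is a closed neighbourhood carrying the \emph{minimum} possible number $\lceil \deg(u)/2\rceil$ of $-1$'s; thus $M$ is precisely a minimal set meeting every closed neighbourhood $N[v]$ in at least $\lceil \deg(v)/2\rceil$ vertices, and the witnesses are the tight neighbourhoods.

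The heart of the argument is a charging scheme injecting $M$ into $P$. For each $v\in M$ choose a witness $u_{v}$; since $p(u_{v})\geq m(u_{v})$ and at most $m(u_{v})$ vertices of $M$ can be assigned to a given witness, a system of distinct representatives lets me send each $v$ to a vertex $\phi(v)\in N[u_{v}]\cap P$ with $\phi$ injective on the vertices sharing a witness. Then a positive vertex $w$ can receive a charge only through a witness $u$ with $w\in N[u]$, and at most one charge per such witness, so $|\phi^{-1}(w)|$ is at most the number of used witnesses lying in $N[w]$. Summing over $P$ yields $m\leq \sum_{w\in P}|N[w]\cap W|$, where $W$ denotes the set of witnesses actually used.

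The main obstacle is the final, sharp count. The naive estimate $|N[w]\cap W|\leq \deg(w)+1\leq r+1$ only gives the weaker $m\leq (r+1)p$, whereas the theorem demands $m\leq rp$, i.e.\ every positive vertex must absorb at most $r$ charges. Closing this off-by-one is exactly where the nearly $r$-regular hypothesis and the parity data above must enter: an even-degree witness carries \emph{strictly} more $+1$'s than $-1$'s in its closed neighbourhood, which frees a representative, and the witnesses together with their representatives should be chosen simultaneously so that no positive vertex is overloaded. Concretely, I would phrase this as a Hall/flow feasibility condition $r\,|N_{H}(S)|\geq |S|$ for all $S\subseteq M$ on the bipartite ``reachability'' graph $H$ joining $v\in M$ to $w\in P$ whenever some saturated $u\in N[v]$ has $w\in N[u]$, and I expect verifying this Hall-type condition, rather than setting up the charging, to be the real technical crux. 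Sharpness then comes for free: the extremal $r$-regular graphs with $r$ odd furnished by the first theorem above are nearly $r$-regular and already attain $(1-r)n/(1+r)$.
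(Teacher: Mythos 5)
Your reduction of the theorem to $m\leq rp$, the witness characterization of maximality (this is Lemma 2.1 of the paper), the parity observations, and the sharpness remark are all correct. But the proof stops exactly where the theorem begins: the charging scheme as you set it up only yields $m\leq\sum_{w\in P}|N[w]\cap W|\leq(r+1)p$, and the improvement to $m\leq rp$ is delegated to a Hall-type feasibility condition $r\,|N_{H}(S)|\geq|S|$ that you never verify --- you yourself call it ``the real technical crux.'' Since $m\leq rp$ \emph{is} the theorem, this is a genuine gap, not a deferred detail. Worse, it is not clear the condition even holds for the graph $H$ as you defined it, because $H$ forces every charge to travel through a witness: a vertex $v\in M$ that has $P$-neighbors of its own may be unable to reach them in $H$ if they lie in no witness's closed neighborhood, and a Hall argument requires the count to localize to every subset $S\subseteq M$, which is strictly harder than the global bound.

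The off-by-one you identified disappears if you charge vertices of $M$ to \emph{edges} of $[M,P]$ rather than to vertices of $P$; this is how the paper proves its Theorem 2.2, of which the present statement is a special case (for a nearly $r$-regular graph $\Delta\leq r$, and both bounds in Theorem 2.2 are at least $(1-r)n/(1+r)$). Partition $M$ into $A_{i}=\{v\in M:|N(v)\cap P|=i\}$. Every $v\in M\setminus A_{0}$ pays for itself with one of its own edges to $P$. If $v\in A_{0}$, then $f(N[v])=-1-\deg(v)\leq -2$, so its witness $u$ is a neighbor of $v$ and hence lies in $M$ (all neighbors of an $A_{0}$-vertex are in $M$), say $u\in A_{i}$; from $0\leq f(N[u])=-1+i-|N(u)\cap M|$ one gets $|N(u)\cap A_{0}|\leq|N(u)\cap M|\leq i-1$, so $u$ can pay for itself and for all of its $A_{0}$-neighbors using its $i$ edges to $P$, one edge each. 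Distinct vertices of $M$ thus receive distinct edges, whence $m\leq|[M,P]|=\sum_{w\in P}|N(w)\cap M|\leq\Delta p\leq rp$: the capacity of a positive vertex is its degree, not its degree plus one, because $w$ is incident to at most $\deg(w)\leq r$ edges of $[M,P]$. Incidentally, composing this edge-assignment with the map sending each edge to its $P$-endpoint produces exactly the overload-free map $\phi$ you hoped Hall's theorem would give, so your framework is completable --- but that completion is the entire proof, and it is missing from your write-up.
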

Also, the author posed the following question as an open problem:
{\em What is a sharp lower bound on $\beta‎_{N}^*(G)$ for a general graph $G$?}\\
Recently, Zhao in \cite{zh} proved that if $G$ is a graph of order $n$ with minimum degree $‎\delta‎$ and maximum degree $‎\Delta‎$     , then
$$\beta‎_{N}^*(G)‎‎‎\geq ‎(‎\delta‎+2+‎\delta ‎\Delta-2‎\Delta^2‎‎‎)n)/(‎\delta+2-‎\delta ‎\Delta+2‎\Delta^2‎‎‎‎)$$
for $‎\delta‎$ even, and
$$\beta‎_{N}^*(G)‎‎‎\geq (‎\delta+1-‎\Delta+‎\delta ‎\Delta-2‎\Delta^2‎‎‎‎‎)n/(‎\delta+1+‎\Delta-‎\delta ‎\Delta+2‎\Delta^2‎‎‎‎‎)$$
for $‎\delta‎$ odd. Moreover he showed that these bounds are sharp.\\
In this paper, in answer to the question, we give a sharp lower bound on the lower against number of a general graph just in terms of order and maximum degree that is tighter that ones in \cite{zh}. Also, we conclude Theorem 1.1 and Theorem 1.2 as immediate results of our main theorem. 

\section{A lower bound on $‎\beta‎_{N}^*(G)‎‎$}
We need the following lemma.
\begin{lemma} \cite{w}
A negative function $f$ of a graph $G$ is maximal if and only if for every $v‎\in V(G)‎$ with $f(v)=-1$, there exists at least one vertex $u‎\in N[v]‎$ such that $f(N[v])=0$ or $1$.
\end{lemma}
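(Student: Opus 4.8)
The plan is to reduce the two-sided maximality condition to a purely local, single-vertex criterion, and then read off exactly when that local criterion can fail. Throughout I treat the statement with its evident intended reading, namely that the dominating vertex is $u$, so the condition is $f(N[u])\in\{0,1\}$.

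First I would establish that $f$ fails to be maximal precisely when some single vertex can be raised from $-1$ to $1$. Suppose $g$ is a negative function with $g\geq f$ pointwise and $g\neq f$. Since both functions take values in $\{-1,1\}$, the inequality $g\geq f$ forces $g$ to agree with $f$ except on a nonempty set of vertices where $f$ equals $-1$ and $g$ equals $1$. Pick any such vertex $v$ and let $f'$ be obtained from $f$ by setting $f'(v)=1$ and leaving all other values unchanged. The key observation is that $g\geq f'$ also holds pointwise: the two agree at $v$, while elsewhere $g\geq f=f'$. Consequently $f'(N[w])\leq g(N[w])\leq 1$ for every $w\in V(G)$, so $f'$ is itself a negative function strictly dominating $f$ at the single vertex $v$. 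Thus non-maximality is always witnessed by raising one vertex, and conversely any single raised vertex destroys maximality.

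Next I would characterize when a fixed vertex $v$ with $f(v)=-1$ admits such a raise. Passing from $f$ to $f'$ changes $f(N[u])$ only for those $u$ with $v\in N[u]$, and for each such $u$ it increases the sum by exactly $2$; by symmetry of the closed neighborhood, $v\in N[u]$ is equivalent to $u\in N[v]$. Hence $f'$ is negative if and only if $f(N[u])+2\leq 1$, that is $f(N[u])\leq -1$, for every $u\in N[v]$. Negating, the raise at $v$ is blocked if and only if there exists $u\in N[v]$ with $f(N[u])\geq 0$; since $f$ is already negative we have $f(N[u])\leq 1$, so this is equivalent to $f(N[u])\in\{0,1\}$.

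Finally I would combine the two steps: $f$ is maximal if and only if no vertex $v$ with $f(v)=-1$ can be raised, which by the previous paragraph holds if and only if every such $v$ has some $u\in N[v]$ with $f(N[u])\in\{0,1\}$, exactly the asserted equivalence. I expect the only genuinely subtle point to be the first step, namely verifying that an arbitrary dominating negative function $g$ can be replaced by one differing from $f$ at a single vertex; the domination inequality $g\geq f'$ is what makes that reduction valid, and the remaining steps are direct monotonicity and parity bookkeeping.
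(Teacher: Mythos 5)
Your proposal is correct, but note that the paper itself offers no proof to compare against: the lemma is imported from Wang's paper \cite{w} as a cited result, so your argument fills in what this paper leaves as a black box. Your proof is the natural one and is complete. The reduction in your first step---that any dominating negative function $g\neq f$ can be replaced by a single-vertex raise $f'$, justified by the pointwise inequality $f'\leq g$ and the monotonicity $f'(N[w])\leq g(N[w])\leq 1$---is exactly the right way to localize maximality, and your second step correctly computes that raising $v$ perturbs only the closed-neighborhood sums $f(N[u])$ for $u\in N[v]$, each by exactly $+2$, so the raise is blocked precisely when some such sum is already $0$ or $1$ (the bound $f(N[u])\leq 1$ coming from negativity of $f$). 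You were also right to read the statement's ``$f(N[v])=0$ or $1$'' as a typo for $f(N[u])=0$ or $1$; with the literal reading the quantifier over $u$ is vacuous and the statement is false (e.g.\ a single edge with both vertices assigned $-1$ is maximal but has $f(N[v])=-2$), and indeed the paper's own use of the lemma in Case 2 of Theorem 2.2 relies on the corrected reading, since it extracts a vertex $u\in N[v]$ with $f(N[u])\in\{0,1\}$.
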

We are now in a position to present the main result of this paper.
\begin{theorem}
Let $G$ be a graph of order $n$ with maximun degree $‎\Delta‎$. Then 
$$\beta‎_{N}^*(G)‎ ‎\geq‎ \left\{
\begin{array}{ccc}
(\dfrac{1-‎\Delta‎}{1+‎\Delta‎})n& \text{ } & ‎\Delta‎\geq ‎\delta+1 \ \ or\ \ ‎\delta=‎\Delta‎\equiv‎‎‎1\ (mod \ 2)‎‎‎‎‎\\ \vspace{0mm}\\
(‎\dfrac{‎\Delta‎+2-‎\Delta‎^2}{‎\Delta‎+2+‎\Delta‎^2}‎)n & \text{ } & otherwise.‎‎\\
\end{array} \right.$$
and these bounds are sharp.
\end{theorem}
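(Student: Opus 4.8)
\noindent The plan is to fix an arbitrary maximal negative function $f$ and write $M=\{v:f(v)=-1\}$ and $P=\{v:f(v)=1\}$, so that $f(V)=|P|-|M|=n-2|M|$. Thus everything reduces to an upper bound on $|M|$, and a short computation shows that the first displayed bound is equivalent to $|M|\le\Delta|P|$ while the second is equivalent to $(\Delta+2)|M|\le\Delta^2|P|$. Since $\Delta^2/(\Delta+2)<\Delta$, the inequality $|M|\le\Delta|P|$ is the statement I would prove for \emph{every} graph (yielding the first bound, and incidentally a $\delta$-free improvement of Zhao's estimates), whereas $(\Delta+2)|M|\le\Delta^2|P|$ is the genuine strengthening reserved for the $\Delta$-regular, $\Delta$-even case. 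Throughout I call a vertex $u$ \emph{tight} if $f(N[u])\in\{0,1\}$; by Lemma~2.1 the maximality of $f$ says precisely that every $v\in M$ has a tight vertex in $N[v]$.

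For the first case I would establish $e(M,P)\ge|M|$, where $e(M,P)$ is the number of edges between $M$ and $P$; combined with $e(M,P)=\sum_{x\in P}d_M(x)\le\Delta|P|$ this gives $|M|\le e(M,P)\le\Delta|P|$. The inequality $e(M,P)=\sum_{v\in M}d_P(v)\ge|M|$ fails vertex by vertex, so I would argue it by discharging, giving each $v\in M$ the charge $d_P(v)$. Split $M=M_0\cup M_1$, where $M_0$ is the set of self-tight $-1$ vertices; for $v\in M_0$ one has $f(N[v])\in\{0,1\}$ and $f(v)=-1$, forcing $d_P(v)-d_M(v)=f(N[v])+1\ge1$, a surplus of positive neighbours. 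A \emph{deficient} vertex $v$ (one with $d_P(v)=0$) cannot be self-tight, so by maximality its tight witness $w$ lies in $N(v)$ and, having no positive neighbour, $v$ forces $w\in M_0$. Routing one unit of charge from each deficient $v$ to its witness $w$ and observing that the number of deficient vertices assigned to a fixed $w$ is at most $d_M(w)\le d_P(w)-1$, each $w\in M_0$ can cover all of them while retaining charge at least $d_P(w)-d_M(w)\ge1$. Every vertex of $M$ then ends with charge $\ge1$, so $e(M,P)\ge|M|$ and the first bound follows for all graphs.

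In the remaining case $G$ is $\Delta$-regular with $\Delta$ even, and the improvement comes entirely from parity. Because $|N[u]|=\Delta+1$ is odd, $f(N[u])$ is odd, so negativity alone forces at least $\Delta/2$ vertices of $M$ in every closed neighbourhood, and a tight vertex must satisfy $f(N[u])=1$, hence contains exactly $\Delta/2$ vertices of $M$ and $\Delta/2+1$ vertices of $P$ in $N[u]$ --- a strict surplus of one positive vertex. Letting $S$ be the set of tight vertices, the identity $\sum_{u\in S}f(N[u])=\sum_{x}f(x)\,|N[x]\cap S|$ reads $|S|=\sum_{x\in P}|N[x]\cap S|-\sum_{x\in M}|N[x]\cap S|$, and the surplus count gives $\sum_{x\in P}|N[x]\cap S|=|S|(\Delta/2+1)$, whence $\sum_{x\in M}|N[x]\cap S|=|S|\,\Delta/2\ge|M|$, i.e. $|M|\le\tfrac{\Delta}{2}|S|$. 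I would then run a weighted discharging that assigns each vertex the target average $t=\tfrac{\Delta+2-\Delta^2}{\Delta+2+\Delta^2}$ and moves the surplus positive guaranteed at each tight vertex onto the negative vertices it dominates, the parity surplus supplying exactly the deficit $1+t$ that each $-1$ vertex must gather.

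The hard part will be precisely this balancing in the even-regular case: the clean bound $|M|\le\tfrac{\Delta}{2}|S|$ is useless when $S$ is large (for the $(++-)$-periodic cycle every vertex is tight), while the incidence bound $|N[x]\cap S|\le\Delta+1$ loses a positive vertex at every tight positive vertex and only yields the weaker ratio $\tfrac{\Delta(\Delta+1)}{\Delta+2}$; the extremal configurations sit at $|S|=|P|=|M|$, so the argument must simultaneously control both regimes, and verifying that the discharging rules close to the exact threshold $t$ --- not any single inequality --- is the technical heart of the proof. Finally I would record sharpness and the promised corollaries: the star $K_{1,\Delta}$ with centre $+1$ and all leaves $-1$ is a maximal negative function of weight $1-\Delta=\tfrac{1-\Delta}{1+\Delta}(\Delta+1)$, which settles the first case in the non-regular range and recovers Theorem~1.2, the regular extremal graphs of Theorem~1.1 settle the two regular subcases, and the cycle pattern $+\,+\,-\,-$ (with its natural $\Delta$-regular analogues, the positives forming small blocks each privately dominating $\Delta/2$ negatives) attains the second bound, so that Theorems~1.1 and~1.2 both drop out as immediate consequences.
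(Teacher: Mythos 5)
Your first paragraph's reduction and your second paragraph's discharging argument are correct and complete: writing the first bound as $|M|\le\Delta|P|$, noting that a self-tight vertex $w\in M$ satisfies $d_P(w)-d_M(w)=f(N[w])+1\ge 1$, that a deficient vertex (one with $d_P(v)=0$) cannot be self-tight and therefore, by Lemma 2.1, has a self-tight witness among its $M$-neighbours, and that each witness can pay one unit to each of its at most $d_M(w)\le d_P(w)-1$ claimants while keeping one unit for itself --- this yields $|M|\le e(M,P)\le\Delta|P|$ for \emph{every} graph. In substance this is the same mechanism as the paper's proof, which partitions $M$ into the sets $A_i$ of vertices with exactly $i$ neighbours in $P$, uses Lemma 2.1 to show $a_0\le\sum_{i\ge k}(i-1)a_i$, and concludes $|M|\le\sum_i i\,a_i=|[M,P]|\le\Delta|P|$; your version is a cleaner, $\delta$-free repackaging of the same count.

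The genuine gap is the second case, $\delta=\Delta\equiv 0\pmod 2$, and you flag it yourself. The parity bookkeeping is fine as far as it goes ($f(N[u])$ odd, tight vertices have exactly $\Delta/2$ negative and $\Delta/2+1$ positive closed neighbours, hence $|M|\le\frac{\Delta}{2}|S|$), but the target $(\Delta+2)|M|\le\Delta^2|P|$ is never reached: you only promise a weighted discharging whose closing, as you say, is ``the technical heart of the proof,'' and you correctly observe that neither $|M|\le\frac{\Delta}{2}|S|$ nor the incidence bound $|N[x]\cap S|\le\Delta+1$ suffices on its own. A sketch that identifies its own decisive step as unresolved is not a proof. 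What you missed is that no new argument is needed here at all: when $\delta=\Delta\equiv 0\pmod 2$ the graph is $\Delta$-regular with $\Delta$ even, and the claimed bound $(\Delta+2-\Delta^2)n/(\Delta+2+\Delta^2)$ is verbatim Theorem 1.1 (Wang's result in \cite{w}); the paper's proof opens by disposing of exactly this case with that citation and then only argues the first bound. The same shortcut repairs your sharpness discussion: the star $K_{1,\Delta}$ is a valid certificate for the first bound, but your $++--$ cycle only covers $\Delta=2$ of the even-regular case, whereas sharpness there follows from the sharpness of Theorem 1.1 itself. Replacing your last two paragraphs by that citation turns your write-up into a correct proof.
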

\begin{proof}
If $‎\delta=‎\Delta‎\equiv0\ (mod \ 2)‎‎‎$, then desired result follows by Theorem 1.1. Hence in what follows we may assume that $‎\Delta‎\geq ‎\delta+1‎‎‎$ or $\delta=‎\Delta‎\equiv‎‎‎1\ (mod \ 2)‎‎‎‎‎$.\\
Let $f$ be a maximal negative function of $G$ with weight $f(V(G))=\beta‎_{N}^*(G)$ and $M=\{v‎\in V|f(v)=-1‎\}$ and $P=\{v‎\in V|f(v)=1‎\}$. Also, $m=|M|$ and $p=|P|$. For notational convenience, we set $l=‎\lfloor‎\frac{‎\Delta‎}{2}‎‎\rfloor+1‎‎$ and $k=‎\lfloor‎\frac{‎\delta‎}{2}‎‎\rfloor+1$. We define $A‎_{i}=\{v‎\in M||N(v)‎\cap P‎|=i‎\}‎$ and $a‎_{i}‎=|A‎_{i}‎|$, for all $0‎\leq i‎\leq ‎l$.
Let $v‎\in M‎$. Since $f$ is a negative function, then $v$ has at most $l$ neighbors in $P$. Therefore, $P$ is the disjoint union, for $0‎\leq i‎\leq ‎l‎‎‎‎$, of the sets $A‎_{i}‎$. Now we get 
\begin{equation}
n=p+m=p+\sum_{i=0}^{l‎‎‎‎}a‎_{i}‎.
\end{equation}
On the other hand, if $[M,P]$ is the set of edges having one end point in $M$ and the other in $P$, then
\begin{equation}
|[M,P]|=\sum_{i=1}^{l‎‎‎‎}ia‎_{i}‎\leq p‎\Delta.‎‎
\end{equation}
{\bf Case 1.} If $A‎_{0}=‎‎‎\phi‎$. By inequalities (1) and (2), we have
 $$n=p+\sum_{i=1}^{l‎‎‎‎}a‎_{i}‎\leq p+\sum_{i=1}^{l}ia‎_{i}‎\leq p+p‎\Delta‎‎‎.$$
Therefore, $p=(n+\beta‎_{N}^*(G))/2‎\geq ‎\dfrac{n}{1+‎\Delta‎}‎‎$, which implies the desired lower bound.\\
{\bf Case 2.} If $A‎_{0}‎‎\neq ‎\phi$. Let $v‎\in A‎_{0}‎$. Obviously, $f(N[v])‎\leq-2‎$. Now Lemma 1 implies that there exists a vertex $u‎\in N[v]‎$ such that $f(N[u])=0$ or $1$. This shows that the set $Q=\{v‎\in N(A‎_{0}‎)|f(N[v])=0\  or\  1‎\}$ is nonempty. Let $v‎\in ‎\cup‎‎‎_{i=0}^{k-1}‎‎‎‎‎‎‎‎A‎_{i}‎$. Then
\begin{equation*}
\begin{array}{lcl}
f(N[v])&=&|N[v]‎\cap P‎|-|N[v]‎\cap (V‎\setminus P‎)‎|=2|N[v]‎\cap P‎|-|N[v]|\\
&‎\leq‎&2(k-1) -deg(v)-1‎\leq -1.‎
\end{array}
\end{equation*}
Therefore $v$ does not belong to $Q$. Hence, $Q‎\subseteq \cup‎‎‎_{i=‎k}^{l}‎‎‎‎‎‎‎‎A‎_{i}‎‎$. Suppose that $u‎\in Q‎\cap A‎_{i}‎‎‎$, for $k‎\leq i‎\leq l‎‎$. We claim that $|N(u)‎\cap A‎_{o}‎‎‎|\leq i-1‎$. Suppose to the contrary that $|N(u)‎\cap A‎_{o}|‎\geq i‎$. Then
\begin{equation*}
\begin{array}{lcl}
0\  or\ 1=f(N[u])&=&-1+|N(u)‎\cap P‎|-|N(u)‎\cap M‎|\\
 ‎&\leq & -1+i-|N(u)‎\cap A‎_{0}‎‎|‎‎\leq-1‎
\end{array}
\end{equation*}
a contradiction. Thus $Q‎\cap A‎_{i}‎‎$ has at most $(i-1)|Q‎\cap A‎_{i}|$ neighbors in $A‎_{0}‎$. Since $f$ is a maximal negative function, for every vertex $v‎\in A‎_{0}‎‎$ there exists a vertex $u‎\in Q‎$ such that $u‎\neq v‎$, which implies $u‎\in Q‎\cap A‎_{i}‎‎‎$, for some $k‎\leq i‎\leq l‎‎$. Hence $A‎_{0}‎‎\subseteq ‎\cup‎‎‎_{i=k}^{l}N(Q‎\cap A‎_{i}‎‎‎)$. Now we deduce that
$$a‎_{0}=|A‎_{0}|‎\leq \sum_{i=k}^{l‎‎‎‎}|N(Q‎\cap A‎_{i}‎‎‎)‎\cap A‎_{0}‎‎|‎\leq ‎‎‎\sum_{i=k}^{l‎‎‎‎}|Q‎\cap A‎_{i}|(i-1)‎\leq \sum_{i=k}^{l‎‎‎‎}(i-1)‎a‎_{i}‎.$$
By (1), we have 
$$n=p+a‎_{0}‎+\sum_{i=1}^{l‎‎‎‎}a‎_{i}‎‎\leq p+‎\sum_{i=k}^{l‎‎‎‎}(i-1)‎a‎_{i}+\sum_{i=1}^{k-1‎‎‎‎}‎a‎_{i}+\sum_{i=k}^{l‎‎‎‎}‎a‎_{i}.$$
Thus
 $$n‎\leq p+\sum_{i=1}^{l‎‎‎‎}ia‎_{i}‎\leq p+p‎\Delta‎‎‎.$$
 Therefore $p‎=(n+\beta‎_{N}^*(G))/2\geq ‎\dfrac{n}{‎\Delta+1‎}‎‎$, as desired.\\
 Since Theorem 1.2 (also Theorem 1.1) is a special case of this theorem, we see that this lower bound is sharp.
\end{proof}
Comparing Theorem 2.2 with its corresponding result in \cite{zh} we can see that the lower bounds in Theorem 2.2 are tighter that their corresponding ones in \cite{zh}. Moreover, Theorem 1.1 and Theorem 1.2 are immediate results of Theorem 2.2 when $‎\delta=‎\Delta=r‎‎$.


\end{document}